\newcommand{\Z}{\Bbb Z}
\newcommand{\R}{\Bbb R}
\def\NL{\hfill\break}
\newcommand{\set}[1]{\left\{#1\right\}}
\newcommand{\parth}[1]{\left(#1\right)}
\newtheorem{theo}{Theorem}[section]
\newtheorem{cor}[theo]{Corollary}
\newtheorem{lem}[theo]{Lemma}
\newtheorem{prop}[theo]{Proposition}
\newtheorem{ex}[theo]{Example}
\newtheorem{df}[theo]{Definition}
\begin{document}
\centerline {\Large{\bf On McCoy Condition and Semicommutative Rings}}

\centerline{}
\centerline{}
\centerline{\bf Mohamed Louzari}
\centerline{}
\centerline{Department of mathematics, Faculty of sciences,}
\centerline{Abdelmalek Essaadi University, Tetouan B.P 2121, Morocco}
\centerline{e-mail: mlouzari@yahoo.com}
\centerline{}
\date{\today}

\begin{abstract}Let $R$ be a ring, $\sigma$ an endomorphism of $R$, $I$ a right ideal in $S=R[x;\sigma]$ and $M_R$ a right $R$-module. We give a generalization of McCoy's Theorem \cite{mccoy}, by showing that, if $r_S(I)$ is $\sigma$-stable or $\sigma$-compatible. Then $\;r_S(I)\neq 0$ implies $r_R(I)\neq 0$. As a consequence, if $R[x;\sigma]$ is semicommutative then $R$ is $\sigma$-skew McCoy. Moreover, we show that the Nagata extension $R\oplus_{\sigma}M_R$ is semicommutative right McCoy when $R$ is a commutative domain.
\end{abstract}

{\bf Mathematics Subject Classification:} 16S36, 16U80\\

{\bf Keywords:} Armendariz rings, McCoy rings, Nagata extension, semicommutative rings, $\sigma$-skew McCoy.

\section{Introduction}
Throughout the paper, $R$ will always denote an associative ring with identity and $M_R$ will stand for a right R-module. Given a ring $R$, the polynomial ring with an indeterminate $x$ over $R$ is denoted by $R[x]$. According to Nielson \cite{nielson/2006}, a ring $R$ is called {\it right McCoy} (resp., {\it left McCoy}) if, for any polynomials $f(x),g(x)\in R[x]\setminus \{0\}$, $f(x)g(x)=0$ implies $f(x)r=0$ (resp., $sg(x)=0$) for some $0\neq r\in R$ (resp., $0\neq s\in R$). A ring is called {\it McCoy} if it is both left and right McCoy. By McCoy \cite{mccoy2}, commutative rings are McCoy rings. Recall that a ring $R$ is {\it reversible} if $ab=0$ implies $ba=0$ for $a,b\in R$, and $R$ is {\it semicommutative} if $ab=0$ implies $aRb=0$ for $a,b\in R$. It is obvious that commutative rings are reversible and reversible rings are semicommutative, but the converse do not hold, respectively. Recently, Nielson \cite{nielson/2006}, proved that reversible rings are McCoy. In \cite[Corollary 2.3]{hirano/2002}, it was claimed that all semicommutative rings were McCoy. However, Hirano's claimed assumed that if $R$ is semicommutative then $R[x]$ is semicommutative, but this was later shown to be false \cite[Example 2]{huh/2002}. Nielson \cite[Theorem 2]{nielson/2006} shows that reversible rings are McCoy and he gives an example of a semicommutative ring which is not right McCoy. Recall that a ring is {\it reduced} if it has no nonzero nilpotent elements. A module $M_R$ is called {\it Armendariz} if whenever polynomials $m=\sum_{i=0}^{n}m_ix^i\in M[x]$ and $f=\sum_{j=0}^{m}a_jx^j\in R[x]$ satisfy $mf=0$, then $m_ia_j=0$ for each $i,j$. A ring $R$ is Armendariz, if $R_R$ is Armendariz. Reduced rings are Armendariz and Armendariz rings are McCoy. We have the following diagram:

$$\begin{array}{c}
R\;\mathrm{is\;reversible} \\
R[x]\;\mathrm{is\;semicommutative}\\
R\;\mathrm{is\;Armendariz}
  \end{array}
\left\}\begin{array}{c}
           \\
         \Rightarrow R\;\mathrm{is\;McCoy}\\
         \\
        \end{array}\right.$$

An Ore extension of a ring $R$ is denoted by $R[x;\sigma,\delta]$, where $\sigma$ is an endomorphism of $R$ and $\delta$ is a $\sigma$-derivation, i.e., $\delta\colon R\rightarrow R$ is an additive map such that $\delta(ab)=\sigma(a)\delta(b)+\delta(a)b$ for all $a,b\in R$. Recall that elements of $R[x;\sigma,\delta]$ are polynomials in $x$ with coefficients written on the left. Multiplication in $R[x;\sigma,\delta]$ is given by the multiplication in $R$ and the condition $xa=\sigma(a)x+\delta(a)$, for all $a\in R$. For $\delta=0$, we put $R[x;\sigma,0]=R[x;\sigma]$.  Baser et al. \cite{kwak/2009}, introduced $\sigma$-skew McCoy for an endomorphism $\sigma$ of $R$. A ring $R$ is called {\it $\sigma$-skew McCoy}, if for any nonzero polynomials $p(x)=\sum_{i=0}^na_ix^i$ and $q(x)=\sum_{j=0}^mb_jx^j\in R[x;\sigma]$, $p(x)q(x)=0$ implies $p(x)c=0$ for some nonzero $c\in R$, and they have proved the following:

$$\begin{array}{c}
   R[x;\sigma]\;\mathrm{is\;right\;McCoy} \\
   R[x;\sigma]\;\mathrm{is\;reversible}
  \end{array}
\left\}\begin{array}{c}
           \\
         \Rightarrow R\;\mathrm{is}\;\sigma\mathrm{-skew\;McCoy} \\
         \\
        \end{array}\right.$$

Hong et al. \cite{hong/2010}, proved that if $\sigma$ is an automorphism of $R$ and $I$ a right ideal of $S=R[x;\sigma,\delta]$ then $r_S(I)\neq 0$ implies $r_R(I)\neq 0$, which is a generalization of McCoy's Theorem \cite{mccoy}.
\par In this paper, we give another generalization of McCoy's Theorem, by showing that for any right ideal $I$ of $S=R[x;\sigma]$, we have $r_S(I)\neq 0$ implies $r_R(I)\neq 0$ when $R$ is $\sigma$-compatible or $r_S(I)$ is $\sigma$-stable. As a consequence, if $R[x;\sigma]$ is semicommutative then $R$ is $\sigma$-skew McCoy. We obtain a generalization of \cite[Corollary 6]{kwak/2009} and \cite[Corollary 2.3]{hirano/2002}.
Furthermore, we show some results on Nagata extension. For a commutative ring $R$, we have
\NL$1)$ If $R$ is a domain, then
\par$(a)$ $M_R$ is Armendariz if and only if $R\oplus_{\sigma}M_R$ is Armendariz.
\par$(b)$ The ring $R\oplus_{\sigma}M_R$ is semicommutative and right McCoy.
\NL$2)$ If $R$ and $M_R$ are Armendariz such that $M_R$ satisfies the condition $(\mathcal{C}_{\sigma}^2)$, then $R\oplus_{\sigma}M_R$ is Armendariz.

\section{A Generalization of McCoy's Theorem}

McCoy \cite{mccoy}, proved that for any right ideal $I$ of $S=R[x_1,x_2,\cdots,x_n]$ over a ring $R$, if $\;r_S(I)\neq 0$ then $r_R(I)\neq 0$. This result was extended by Hong et al. \cite{hong/2010} to many skew polynomial rings, where $\sigma$ is an automorphism of $R$. Herein, we'll extend McCoy's Theorem to skew polynomial rings of the form $R[x;\sigma]$ with $\sigma$ an endomorphism of $R$. According to Annin \cite{annin}, a ring $R$ is $\sigma$-{\it compatible}, if for any $a,b\in R$, $ab=0$ if and only if $a\sigma(b)=0$. Let $\sigma$ be an endomorphism of $R$ and $I$ an ideal of $R$, we say that the ideal $I$ is {\it $\sigma$-stable}, if $\sigma(I)\subseteq I$. Let $\sigma$ be an endomorphism of a ring $R$, then for any $f=\sum_{i=0}^na_ix^i\in R[x;\sigma]$, we denote the polynomial $\sigma(f)$ by $\sigma(f)=\sum_{i=0}^n\sigma(a_i)x^i$.

\begin{theo}\label{gen/mccoy}Let $R$ be a ring, $\sigma$ an endomorphism of $R$ and $I$ a right ideal in $S=R[x;\sigma]$. Suppose that $r_S(I)$ is $\sigma$-stable or $\sigma$-compatible. If $\;r_S(I)\neq 0$ then $r_R(I)\neq 0$.
\end{theo}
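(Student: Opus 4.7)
The plan is to pick a nonzero polynomial $g = \sum_{j=0}^{m} b_j x^j \in r_S(I)$ of \emph{minimal} degree $m$, and to prove that its leading coefficient $b_m$ belongs to $r_R(I)$. A first observation is that $r_S(I)$ is a right ideal of $S$ and, moreover, that for any $a \in R$ and any $h \in r_S(I)$ the identity $f(ah) = (fa)h$ with $fa \in I$ shows $ah \in r_S(I)$ as well. Combined with the minimality of $m$, this yields the key principle I will exploit: any element of $r_S(I)$ of degree strictly less than $m$ must be identically zero.

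Next, I would fix an arbitrary $f = \sum_{i=0}^{n} a_i x^i \in I$ and read off the coefficients of $fg = 0$ from the top down. The coefficient of $x^{n+m-k}$ equals $\sum_{l=0}^{k} a_{n-l}\,\sigma^{n-l}(b_{m-k+l}) = 0$, with the usual convention that out-of-range indices contribute zero. I expect to prove by induction on $k = 0, 1, \ldots, n$ that $a_{n-k}\,\sigma^{n-k}(b_j) = 0$ for every $j \in \{0,\ldots,m\}$. The base case $k=0$ is the leading-coefficient equation $a_n \sigma^n(b_m) = 0$; at step $k$, the induction hypothesis kills every term of the above coefficient equation except $a_{n-k}\,\sigma^{n-k}(b_m)$, so this lone remaining term must vanish.

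To upgrade this single vanishing into the full statement $a_{n-k}\,\sigma^{n-k}(b_j) = 0$ for all $j$, I would split into the two cases of the hypothesis. If $r_S(I)$ is $\sigma$-stable, then $\sigma^{n-k}(g) \in r_S(I)$, hence so is $a_{n-k}\,\sigma^{n-k}(g)$; its leading coefficient is $a_{n-k}\,\sigma^{n-k}(b_m) = 0$, so the minimality principle forces $a_{n-k}\,\sigma^{n-k}(g) = 0$, and the claim follows at once. If instead $R$ is $\sigma$-compatible, the same argument applies to $a_{n-k}\, g$ directly: $\sigma$-compatibility turns $a_{n-k}\,\sigma^{n-k}(b_m) = 0$ into $a_{n-k}\, b_m = 0$, minimality yields $a_{n-k}\, g = 0$ (so $a_{n-k}\, b_j = 0$ for all $j$), and applying $\sigma$-compatibility once more recovers $a_{n-k}\,\sigma^{n-k}(b_j) = 0$. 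Either way the induction closes, giving $a_i\,\sigma^i(b_m) = 0$ for every $i$, which is exactly $f\cdot b_m = 0$ in $S$. Since $b_m \neq 0$ and $f \in I$ was arbitrary, we obtain $0 \neq b_m \in r_R(I)$.

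The main technical obstacle I anticipate is the bookkeeping in the inductive step: one has to verify that in the equation coming from the coefficient of $x^{n+m-k}$ every term other than $a_{n-k}\,\sigma^{n-k}(b_m)$ really is killed by the previously established relations, which requires a careful match of the index pairs $(n-l,\, m-k+l)$ against the induction hypothesis. Once that alignment is in place, the two hypotheses play completely symmetric roles: $\sigma$-stability keeps $\sigma$-iterates of $g$ inside $r_S(I)$ so that the minimality principle applies on the nose, while $\sigma$-compatibility lets one forget $\sigma$ altogether and argue purely at the level of products in $R$.
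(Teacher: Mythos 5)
Your proof is correct, and while it relies on the same two closure facts as the paper's argument --- that $r_S(I)$ is closed under left multiplication by elements of $R$ (since $f(ah)=(fa)h$ with $fa\in I$) and that $\sigma$-stability keeps $\sigma^{i}(g)$ inside $r_S(I)$, respectively that $\sigma$-compatibility lets one pass between $a\sigma^{i}(b)=0$ and $ab=0$ --- it is organized along a genuinely different route. The paper runs an explicit descent: from an arbitrary $g\in r_S(I)$ it manufactures $a_p\sigma^p(g)$, then $c_q\sigma^q(a_p\sigma^p(g))$, and so on, possibly switching to new polynomials $h\in I$ at each stage, and closes with an informal ``continuing with the same manner'' to reach a degree-zero element $a_{t_1}\cdots a_{t_s}\sigma^{t_1+\cdots+t_s}(g)\in r_R(I)$. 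You instead choose $g$ of minimal degree among nonzero elements of $r_S(I)$, so that any element of $r_S(I)$ of smaller degree is outright zero, and then for each fixed $f\in I$ a single top-down induction on the coefficient equations of $fg=0$ yields $a_i\sigma^i(b_j)=0$ for all $i,j$; the bookkeeping you flag does close, since at step $k$ the $x^{n+m-k}$-equation involves exactly the pairs $(n-l,\,m-k+l)$ and for $l<k$ these are killed by the hypothesis $a_{n-l}\sigma^{n-l}(b_j)=0$ for all $j$, isolating $a_{n-k}\sigma^{n-k}(b_m)=0$. What this buys is a fully rigorous induction in place of the paper's iterated construction, and a sharper conclusion: the leading coefficient $b_m$ of a minimal-degree element of $r_S(I)$ (indeed every coefficient of $g$) lies in $r_R(I)$, rather than merely some product of coefficients applied to $g$; what the paper's version buys is that it never needs to invoke minimality, working instead with whatever $g$ one starts from.
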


\begin{proof}Suppose that $r_R(I)\neq 0$. If $I=0$, then it's trivial. Assume that $I\neq 0$. Let $g(x)=\sum_{j=0}^mb_jx^j\in r_S(I)$, we can set $b_m\neq 0$. If $m=0$, we're done, so we can suppose that $m\geq 1$. In this situation, if $Ib_m=0$, then we're done. Otherwise, there exists $0\neq f(x)=\sum_{i=0}^na_ix^i\in I$, such that $f(x)b_m\neq 0$.

\par{\bf Case 1.} If $r_S(I)$ is $\sigma$-stable, then $a_ix^ib_m\neq 0$ for some $i\in\{0,1,\cdots,n\}$, therefore $a_ix^ig(x)\neq 0$. Take $p=\max\{i|a_ix^ig(x)\neq 0\}$. Then $a_p\sigma^p(g(x))\neq 0$ and $a_ix^ig(x)=0$ for $i\geq p+1$. We obtain $a_p\sigma^p(b_m)=0$ from $f(x)g(x)=0$. Also, we have $I(a_p\sigma^p(g(x)))=(Ia_p)\sigma^p(g(x))=0$ because $I$ is a right ideal of $S$ and $\sigma^p(g(x))\in r_S(I)$. So $0\neq a_p\sigma^p(g(x))\in r_S(I)$. We can write $a_p\sigma^p(g(x))=a_p\sigma^p(b_0)+a_p\sigma^p(b_1)x+\cdots+a_p\sigma^p(b_{\ell})x^{\ell}$, where $a_p\sigma^p(b_{\ell})\neq 0$ and $\ell< m$. If $\ell=0$ then $Ia_p\sigma^p(b_{\ell})=0$, so $0\neq a_p\sigma^p(b_{\ell})\in r_R(I)$. Otherwise, $\ell\geq 1$, then we'll consider $a_p\sigma^p(g(x))$ in place of $g(x)$. $0\neq h(x)=\sum_{k=0}^sc_kx^k\in I$ such that $h(x)a_p\sigma^p(b_{\ell})\neq 0$. We can find $q$ as a the largest integer such that $c_q\sigma^q(a_p\sigma^p(g(x)))\neq 0$ and then $0\neq c_q\sigma^q(a_p\sigma^p(g(x)))\in r_S(I)$ such that the degree of $c_q\sigma^q(a_p\sigma^p(g(x)))$ is smaller than one of $a_p\sigma^p(g(x))$.

\par{\bf Case 2.} If $R$ is $\sigma$-compatible, then $a_i\sigma^i(b_m)\neq 0$ for some $i\in\{0,1,\cdots,n\}$, also $a_ib_m\neq 0$, by $\sigma$-compatibility of $R$. Therefore $a_ig(x)\neq 0$ for some $i\in\{0,1,\cdots,n\}$. Take $p=\max\{i|a_ig(x)\neq 0\}$, so $a_pg(x)\neq 0$ and $a_{p+1}g(x)=\cdots=a_ng(x)=0$, thus $a_ib_j=0$ for $i\in\{p+1,\cdots,n\}$ and $j\in\{0,1,\cdots,m\}$. For $i\in\{p+1,\cdots,n\}$, we have $a_ix^ig(x)=\parth{\sum_{j=0}^ma_i\sigma^i(b_j)x^j}x^i=0$, with $\sigma$-compatibility and the previous condition. On other,  we get $a_pb_m=0$ from $f(x)g(x)=0$. So that the degree of $a_pg(x)$ is less than $m$ such that $a_pg(x)\neq 0$. But $I(a_pg(x))=(Ia_p)g(x)=0$ since $I$ is a right ideal of $S$, so $0\neq a_pg(x)\in r_S(I)$. We can write $a_pg(x)=\sum_{k=0}^{\ell}a_pb_kx^k$ with $a_pb_{\ell}\neq 0$ and $\ell<m$. We have the two possibilities: If $\ell=0$ then $a_pg(x)$ is a nonzero element in $r_R(I)$. Otherwise, $\ell\geq 1$. Then we'll consider $a_pg(x)$ in place of $g(x)$. We have two cases $I(a_pb_{\ell})=0$ or $I(a_pb_{\ell})\neq 0$. The first implies $0\neq a_pb_{\ell}\in r_R(I)$, for the second, there exists $0\neq h(x)=\sum_{k=0}^sc_kx^k\in I$ such that $h(x)a_pb_{\ell}\neq 0$. Here, we can find $q$ as a the largest integer such that $c_qa_pg(x)\neq 0$ and then $0\neq c_qa_pg(x)\in r_S(I)$ such that the degree of $c_qa_pg(x)$ is smaller than one of $a_pg(x)$.
\par Continuing with the same manner (in the two cases), we can produce elements of the forms
$0\neq a_{t_1}a_{t_2}\cdots a_{t_s}\sigma^{t_1+t_2+\cdots+t_s}g(x)$ (resp., $0\neq a_{t_1}a_{t_2}\cdots a_{t_s}g(x)$) in $r_S(I)$, with $s\leq m$ and the degree of these polynomials is zero. Thus $a_{t_1}a_{t_2}\cdots a_{t_s}\sigma^{t_1+t_2+\cdots+t_s}g(x)\in r_R(I)$ (resp., $0\neq a_{t_1}a_{t_2}\cdots a_{t_s}g(x)\in r_R(I))$. Therefore $r_R(I)\neq 0$.
\end{proof}

\begin{cor}[{\cite[Theorem 2.2]{hirano/2002}}]Let $f(x)$ be an element of $R[x]$. If $\;r_{R[x]}(f(x)R[x])\neq 0$ then $r_{R[x]}(f(x)R[x])\cap R\neq 0$
\end{cor}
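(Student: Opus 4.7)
The plan is to obtain this as an immediate application of Theorem \ref{gen/mccoy}. First I would specialize that theorem by taking $\sigma$ to be the identity endomorphism $\mathrm{id}_R$, so that $S = R[x;\mathrm{id}] = R[x]$ is the ordinary polynomial ring. The right ideal to plug in is $I = f(x)R[x]$, which is indeed a right ideal of $S$.

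Next I would verify the hypothesis of the theorem. When $\sigma = \mathrm{id}_R$, one has $\sigma(b) = b$ for every $b \in R$, so the equivalence $ab=0 \iff a\sigma(b)=0$ holds trivially, and hence $R$ is $\sigma$-compatible. (Alternatively one can note that every ideal of $R$ is $\sigma$-stable in this case.) Thus Theorem \ref{gen/mccoy} applies and gives $r_S(I) \neq 0 \Rightarrow r_R(I) \neq 0$.

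Finally I would translate $r_R(I) \neq 0$ into the form stated in the corollary. For any $r \in R$, the equality $f(x)R[x]\cdot r = 0$ holds inside $R[x]$ if and only if it holds as a relation in $R$, since $r$ has degree zero and multiplication in $R[x]$ restricted to $R$ is the ring multiplication. Consequently $r_R(I) = r_{R[x]}(f(x)R[x]) \cap R$, and the conclusion $r_{R[x]}(f(x)R[x]) \cap R \neq 0$ follows at once.

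There is essentially no obstacle here; the only thing to watch is the bookkeeping between $r_S$ and $r_R$, i.e.\ making sure the definition of $r_R(I)$ used in the theorem matches $r_{R[x]}(f(x)R[x])\cap R$ in the corollary's statement.
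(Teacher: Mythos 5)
Your proposal is correct and follows exactly the paper's route: the paper's proof consists of the single line ``consider the right ideal $I=f(x)R[x]$'' and applies Theorem \ref{gen/mccoy} with $\sigma=\mathrm{id}_R$, which is trivially $\sigma$-compatible (and $\sigma$-stable). You have merely made explicit the verification of the hypothesis and the identification $r_R(I)=r_{R[x]}(f(x)R[x])\cap R$, which is all that was left implicit.
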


\begin{proof}Consider the right ideal $I=f(x)R[x]$.
\end{proof}

\begin{cor}Let $R$ be a ring, $\sigma$ an endomorphism of $R$ and $I$ a right ideal of $S=R[x;\sigma]$. If $S$ is semicommutative, then $r_S(I)\neq 0$ implies $r_R(I)\neq 0$.
\end{cor}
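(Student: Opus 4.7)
The plan is to reduce this to Theorem \ref{gen/mccoy} by verifying that the first of its two alternative hypotheses, namely that $r_S(I)$ is $\sigma$-stable, is forced upon us whenever $S=R[x;\sigma]$ is semicommutative. Since $\sigma$-stability of $r_S(I)$ alone already triggers the conclusion of the theorem, nothing else needs to be done.

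To carry this out, I would take an arbitrary $g(x)=\sum_{j=0}^m b_j x^j\in r_S(I)$ and an arbitrary $f(x)\in I$. By definition $f(x)g(x)=0$, and because $S$ is semicommutative this upgrades to $f(x)Sg(x)=0$. Specializing the middle factor to the indeterminate gives $f(x)\cdot x\cdot g(x)=0$. Using the defining skew-commutation rule $xa=\sigma(a)x$ applied term by term, one has $xg(x)=\sigma(g(x))\,x$, where $\sigma(g(x))=\sum_{j=0}^m\sigma(b_j)x^j$ is the coefficient-wise extension of $\sigma$ to $S$. Thus the previous display becomes
\[
f(x)\sigma(g(x))\,x=0.
\]

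Now right multiplication by $x$ in $R[x;\sigma]$ is injective, because if $p(x)=\sum c_i x^i$ satisfies $p(x)x=\sum c_i x^{i+1}=0$, then all $c_i=0$. Consequently $f(x)\sigma(g(x))=0$, and since $f(x)\in I$ was arbitrary we conclude $\sigma(g(x))\in r_S(I)$. Hence $\sigma(r_S(I))\subseteq r_S(I)$, i.e.\ $r_S(I)$ is $\sigma$-stable. Theorem \ref{gen/mccoy} then delivers $r_R(I)\neq 0$, which is exactly the desired conclusion.

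There is no serious obstacle here; the only conceptual point is recognizing that semicommutativity of $S$ gives us the freedom to insert $x$ between $f(x)$ and $g(x)$, after which the skew rule automatically manufactures $\sigma$-stability. If anything subtle arises it is only the convention for interpreting ``$r_S(I)$ is $\sigma$-stable'' for a right ideal of $S$ rather than an ideal of $R$, but the natural coefficient-wise extension of $\sigma$ is what is used in the proof of Theorem \ref{gen/mccoy} itself (Case 1), so the meaning is unambiguous.
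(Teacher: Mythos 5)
Your proof is correct and is essentially the paper's intended argument: the corollary is deduced from Theorem \ref{gen/mccoy} by showing that semicommutativity of $S$ forces $r_S(I)$ to be $\sigma$-stable, via inserting $x$ between $f(x)$ and $g(x)$ and using $xg(x)=\sigma(g(x))x$ together with cancellation of the right factor $x$. This is exactly the mechanism the paper itself spells out in the proof of Proposition \ref{semicomm/Mccoy}, so there is nothing to add.
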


According to Clark \cite{clark}, a ring $R$ is said to be {\it quasi-Baer} if the right annihilator of each right ideal of  $R$ is generated (as a right ideal) by an idempotent. Following Zhang and Chen \cite{zhang/chen}, a ring $R$ is said to be $\sigma$-semicommutative if, for any $a,b\in R$, $ab=0$ implies $aR\sigma(b)=0$. A ring $R$ is called {\it right $($left$)$ $\sigma$-reversible} if whenever $ab=$ for $a,b\in R$, $b\sigma(a)=0$ ($\sigma(b)a=0$). A ring $R$ is called $\sigma$-reversible if it is both right and left $\sigma$-reversible. Hong et al. \cite{hong/2000}, proved that, if $R$ is $\sigma$-rigid then $R$ is quasi-Baer if and only if $R[x;\sigma]$ is quasi-Baer. Recently, Hong et al. \cite{hong/2009}, have proved the same result when $R$ is semi-prime and all ideals of $R$ are $\sigma$-stable.

\begin{prop}\label{cor1}Let $R$ be a $\sigma$-semicommutative ring. If $R[x;\sigma]$ is quasi-Baer then $R$ so is.
\end{prop}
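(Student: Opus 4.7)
The plan is to bootstrap the quasi-Baer property from $S=R[x;\sigma]$ down to $R$ by attaching to each right ideal $I$ of $R$ the right ideal $J=IS$ of $S$, and then extracting an idempotent of $R$ from the idempotent generator of $r_S(J)$. Note that $J$ consists precisely of the polynomials with coefficients in $I$, because for $a\in I$ and $\sum b_j x^j \in S$ one has $a(\sum b_j x^j)=\sum (ab_j)x^j$ with $ab_j\in I$.

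The first step is to prove the identity $r_R(I)=r_S(J)\cap R$. The inclusion $\supseteq$ is immediate. For $\subseteq$, take $r\in R$ with $Ir=0$; a typical element of $Jr$ has the form $\bigl(\sum a_j x^j\bigr) r=\sum a_j\sigma^j(r) x^j$ with $a_j\in I$, so it suffices to show $I\sigma^j(r)=0$ for every $j\geq 0$. This is where $\sigma$-semicommutativity enters: from $ar=0$ for all $a\in I$ one gets $aR\sigma(r)=0$, so $a\sigma(r)=0$; iterating (applied to $b=\sigma(r)$, then $\sigma^2(r)$, etc.) gives $a\sigma^j(r)=0$ for all $j\geq 0$ by induction. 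Hence $Jr=0$, that is $r\in r_S(J)$.

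Since $S$ is quasi-Baer, there is an idempotent $e=\sum_{i=0}^{n}e_i x^i\in S$ with $r_S(J)=eS$. The second step is to prove that $e_0\in R$ is already an idempotent and that $r_R(I)=e_0R$. Idempotence of $e_0$ comes from comparing the constant terms in $e^2=e$, since the constant term of $e^2=\sum_{i,j} e_i\sigma^i(e_j)x^{i+j}$ is $e_0^2$. Writing $J\cdot e=0$ and comparing coefficients of each power of $x$ shows $Ie_i=0$ for all $i$, so in particular $e_0\in r_R(I)$, giving $e_0R\subseteq r_R(I)$. Conversely, if $r\in r_R(I)$, then by the first step $r\in r_S(J)=eS$, so $r=es$ for some $s=\sum s_k x^k\in S$; comparing constant terms in $es=r$ (a constant polynomial) yields $r=e_0 s_0\in e_0R$.

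The conceptual work is all in the first step; after that the extraction of $e_0$ is a coefficient-chasing exercise. The main obstacle I anticipate is exactly that first step, namely propagating $Ir=0$ to $I\sigma^j(r)=0$ for all $j$. Without the $\sigma$-semicommutative hypothesis there is no reason for $\sigma$ to preserve the annihilator of $I$, and the identification $r_R(I)=r_S(J)\cap R$ would break; conversely, once the hypothesis gives us this identification, the rest of the argument is a standard passage from an idempotent in $S$ to its constant term in $R$.
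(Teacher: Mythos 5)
Your proof is correct and follows essentially the same route as the paper: form the right ideal $IS$ of $S=R[x;\sigma]$, use $\sigma$-semicommutativity to propagate $Ir=0$ to $I\sigma^j(r)=0$ (hence $ISr=0$), and identify $r_R(I)$ with $e_0R$ where $e_0$ is the constant term of the idempotent generating $r_S(IS)$. The only difference is that you verify the constant-term extraction ($e_0$ idempotent and $r_R(I)=e_0R$) by direct coefficient comparison, whereas the paper cites this step from Baser--Harmanci--Kwak (Proposition 3.9); your self-contained version is fine.
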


\begin{proof} Let $I$ be a right ideal of $R$. We have $r_{R[x;\sigma]}(IR[x;\sigma])=eR[x;\sigma]$ for some idempotent $e=e_0+e_1x+\cdots+e_mx^m\in R[x;\sigma]$. By \cite[Proposition 3.9]{baser/2008}, $r_R(IR[x;\sigma])=e_0R$. Clearly, $r_R(IR[x;\sigma])\subseteq r_R(I)$. Conversely, let $b\in r_R(I)$ then $Ib=0$, since $R$ is $\sigma$-semicommutative, we have $IR[x;\sigma]b=0$, so $b\in r_R(IR[x;\sigma])$. Therefore $r_R(I)=e_0R$.
\end{proof}

\begin{ex}Let $\Z$ be the ring of integers and consider the ring $$R=\set{(a,b)\in\Z\oplus\Z\mid\;a\equiv b\;\mathrm{(mod\;2)}}$$ and $\sigma\colon\R\rightarrow R$ defined by $\sigma(a,b)=(b,a)$.
\NL$1)$ $R[x;\sigma]$ is quasi-Baer and $R$ is not quasi-Baer, by \cite[Example 9]{hong/2000}.
\NL$2)$ $R$ is not $\sigma$-semicommutative. Let $a=(2,0)$ and $b=(0,2)$ we have $ab=0$, but $a\sigma(b)=(2,0)(2,0=(4,0)\neq 0)$. Thus $R$ is not semicommutative. Therefore the condition ``$R$ is $\sigma$ semicommutative" is not a superfluous condition in Proposition \ref{cor1}.
\end{ex}

\begin{prop}\label{semicomm/Mccoy}Let $\sigma$ be an endomorphism of a ring $R$. If $R[x;\sigma]$ is a semicommutative ring then $R$ is $\sigma$-skew McCoy.
\end{prop}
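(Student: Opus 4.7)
The plan is to apply Theorem \ref{gen/mccoy} to the principal right ideal generated by $p(x)$ in $S=R[x;\sigma]$ and extract a nonzero right annihilator in $R$ from it. Concretely, given nonzero $p(x),q(x)\in S$ with $p(x)q(x)=0$, I would set $I=p(x)S$. Since $S$ is semicommutative, $p(x)q(x)=0$ forces $p(x)Sq(x)=0$, so $q(x)\in r_S(I)$ and $I,r_S(I)$ are both nonzero. The goal is then to show that $r_S(I)$ is $\sigma$-stable (in the sense of the extended $\sigma$ on $S$), because once that is established Theorem \ref{gen/mccoy} immediately yields $r_R(I)\neq 0$, and any nonzero $c\in r_R(I)$ satisfies $p(x)c=0$, which is exactly the $\sigma$-skew McCoy conclusion.

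The key step is therefore the $\sigma$-stability of $r_S(I)$. Take any $f(x)\in r_S(I)$, so $p(x)Sf(x)=0$; in particular $p(x)\cdot x\cdot f(x)=0$. Using the twisted multiplication $xa=\sigma(a)x$, I have $x\,f(x)=\sigma(f(x))\,x$, hence
\[
0=p(x)\,x\,f(x)=p(x)\sigma(f(x))\,x.
\]
Right multiplication by $x$ in $R[x;\sigma]$ only shifts coefficients, so $p(x)\sigma(f(x))=0$. Invoking semicommutativity of $S$ once more, $p(x)\sigma(f(x))=0$ upgrades to $p(x)S\sigma(f(x))=0$, so $\sigma(f(x))\in r_S(I)$. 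Thus $\sigma(r_S(I))\subseteq r_S(I)$, as required.

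With $\sigma$-stability in hand, Theorem \ref{gen/mccoy} gives $r_R(I)\neq 0$; pick any nonzero $c\in r_R(I)$ and note $p(x)c\in Ic=0$. This completes the proof, and I expect the only real subtlety is the small bookkeeping in the second paragraph: reading off $p(x)\sigma(f(x))=0$ from $p(x)\sigma(f(x))\,x=0$, and remembering that one must reapply semicommutativity of $S$ (not just use $p(x)\sigma(f(x))=0$) in order to land back in $r_S(p(x)S)$ rather than in the weaker set $\{h:p(x)h=0\}$.
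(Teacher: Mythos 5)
Your proposal is correct and follows essentially the same route as the paper: form $I=p(x)R[x;\sigma]$, use semicommutativity of $R[x;\sigma]$ together with $xh(x)=\sigma(h(x))x$ to show $r_S(I)$ is $\sigma$-stable, and then invoke Theorem \ref{gen/mccoy} to produce a nonzero $c\in r_R(I)$ with $p(x)c=0$. The only (harmless) difference is bookkeeping in the stability step: you cancel the trailing $x$ to get $p(x)\sigma(f(x))=0$ and then reapply semicommutativity once, whereas the paper inserts $x$ between an arbitrary $\varphi(x)$ and $h(x)$ directly, which yields $f(x)\varphi(x)\sigma(h(x))=0$ for all $\varphi$ in one pass.
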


\begin{proof}Let $f(x)\in R[x;\sigma]$ and consider the right ideal $I=f(x)R[x;\sigma]$. We claim that $r_{R[x;\sigma]}(f(x)R[x;\sigma])$ is $\sigma$-stable. Let $\varphi(x)\in R[x;\sigma]$ and $h(x)\in r_{R[x;\sigma]}(f(x)R[x;\sigma])$. Then $f(x)\varphi(x)h(x)=0$, since $R[x;\sigma]$ is semicommutative, we get $f(x)\varphi(x)xh(x)=0$ which gives $f(x)\varphi(x)\sigma(h(x))x=0$. Therefore $r_{R[x;\sigma]}(f(x)R[x;\sigma])$ is $\sigma$-stable. On other hand, let $g(x)\in R[x;\sigma]\setminus\{0\}$ such that $f(x)g(x)=0$. We have $f(x)R[x;\sigma]g(x)=0$, then there exists a nonzero $c\in R$ satisfying $f(x)R[x;\sigma]c=0$ by Theorem \ref{gen/mccoy}. In particular, we have $f(x)c=0$. Thus $R$ is $\sigma$-skew McCoy.

\end{proof}

We have immediately the next Corollaries.

\begin{cor}[{\cite[Corollary 6]{kwak/2009}}]Let $\sigma$ be an endomorphism of a ring $R$. If $R[x;\sigma]$ is reversible then $R$ is $\sigma$-skew McCoy.
\end{cor}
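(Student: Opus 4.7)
The plan is to observe that this corollary is an immediate specialization of Proposition \ref{semicomm/Mccoy}, once one recalls the basic fact, already noted in the introduction, that every reversible ring is semicommutative. Consequently the only substantive point to record is this implication applied to the ring $R[x;\sigma]$, after which the preceding proposition does all the work.

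Concretely, I would first verify the implication in the present context: assuming $R[x;\sigma]$ is reversible and given $f,g\in R[x;\sigma]$ with $fg=0$, reversibility yields $gf=0$; multiplying by any $h\in R[x;\sigma]$ on the right gives $gfh=0$, and applying reversibility once more to the product $(fh)g$ yields $fhg=0$. Since $h$ was arbitrary, $fR[x;\sigma]g=0$, so $R[x;\sigma]$ is semicommutative. Proposition \ref{semicomm/Mccoy} then immediately delivers that $R$ is $\sigma$-skew McCoy, which is the desired conclusion.

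There is essentially no obstacle here, and in particular no further manipulation of skew polynomials or use of the compatibility/stability hypotheses is required. All of the genuine work has already been carried out in Theorem \ref{gen/mccoy} (where the generalization of McCoy's theorem reduces $r_S(I)\neq 0$ to $r_R(I)\neq 0$ under $\sigma$-stability) and in Proposition \ref{semicomm/Mccoy} (where the $\sigma$-stability of $r_{R[x;\sigma]}(f(x)R[x;\sigma])$ was extracted from semicommutativity of $R[x;\sigma]$). The corollary is therefore a one-line passage from a stronger hypothesis on $R[x;\sigma]$ to a weaker one already known to suffice.
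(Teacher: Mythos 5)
Your proof is correct and follows exactly the paper's route: the paper derives this corollary "immediately" from Proposition \ref{semicomm/Mccoy} via the standard fact that reversible rings are semicommutative, which you simply verify explicitly for $R[x;\sigma]$. Your verification of that implication is valid (from $g(fh)=0$ reversibility indeed gives $(fh)g=0$), so nothing is missing.
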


\begin{cor}[{\cite[Corollary 2.3]{hirano/2002}}]Let $\sigma$ be an endomorphism of a ring $R$. If $R[x]$ is semicommutative then $R$ is right McCoy.
\end{cor}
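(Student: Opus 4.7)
The plan is to obtain this corollary as the identity-endomorphism specialization of Proposition \ref{semicomm/Mccoy}. First I would take $\sigma=\mathrm{id}_R$, the identity endomorphism of $R$. Under this choice, the twisted multiplication $xa=\sigma(a)x$ in the Ore extension $R[x;\sigma]$ collapses to the ordinary commuting rule $xa=ax$, so $R[x;\sigma]=R[x]$ as rings. The hypothesis that $R[x]$ is semicommutative therefore translates directly into the hypothesis of Proposition \ref{semicomm/Mccoy} applied to this $\sigma$.

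Next, I would invoke Proposition \ref{semicomm/Mccoy} to conclude that $R$ is $\sigma$-skew McCoy with $\sigma=\mathrm{id}_R$. Unpacking the definition of $\sigma$-skew McCoy in this case, for any nonzero polynomials $f(x),g(x)\in R[x;\sigma]=R[x]$ with $f(x)g(x)=0$ there exists a nonzero $c\in R$ such that $f(x)c=0$. This is precisely the right McCoy condition for $R$, as required.

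There is no substantive obstacle: the essential work has already been carried out in Theorem \ref{gen/mccoy} and Proposition \ref{semicomm/Mccoy}. The only sanity check is that the $\sigma$-stability argument invoked inside the proof of Proposition \ref{semicomm/Mccoy} remains valid for $\sigma=\mathrm{id}_R$, but every subset of $R[x;\sigma]$ is trivially closed under $\mathrm{id}_R$, so the hypothesis of Theorem \ref{gen/mccoy} is satisfied for free. Note also that the statement's mention of $\sigma$ is immaterial to the conclusion; it appears only so that the corollary can be phrased as a direct application of the preceding proposition.
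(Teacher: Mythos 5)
Your proposal is correct and matches the paper's intent: the corollary is stated as an immediate consequence of Proposition \ref{semicomm/Mccoy}, obtained exactly as you do by taking $\sigma=\mathrm{id}_R$, under which $R[x;\sigma]=R[x]$ and the $\sigma$-skew McCoy condition reduces to the right McCoy condition.
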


\begin{df}\label{df1}Let $R$ be a ring, $M_R$ an $R$-module and $\sigma$ an endomorphism of $R$. For $m\in M_R$ and $a\in R$. We say that $M_R$ satisfies the condition  $(\mathcal{C}_{\sigma}^1)$ $($resp., $(\mathcal{C}_{\sigma}^2)$$)$ if $ma=0$ $($resp., $m\sigma(a)a=0$$)$ implies $m\sigma(a)=0$.
\end{df}

\begin{cor}\label{cor2}Let $\sigma$ be an endomorphism of a ring $R$.
\NL$(1)$ If $R$ is semicommutative satisfying the condition $(\mathcal{C}_{\sigma}^2)$ then it is $\sigma$-skew McCoy.
\NL$(2)$ If $R$ is reduced and right $\sigma$-reversible then it is $\sigma$-skew McCoy.
\end{cor}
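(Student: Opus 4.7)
The plan is to prove part (1) by establishing that $R[x;\sigma]$ is semicommutative, whence Proposition~\ref{semicomm/Mccoy} applies, and to deduce part (2) by reducing its hypotheses to those of (1).

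For (1), I would first observe that semicommutativity together with $(\mathcal{C}_{\sigma}^{2})$ forces the apparently stronger condition $(\mathcal{C}_{\sigma}^{1})$: given $ab=0$, semicommutativity yields $a\sigma(b)b=0$, and $(\mathcal{C}_{\sigma}^{2})$ peels off the trailing factor to give $a\sigma(b)=0$; iterating, $a\sigma^{k}(b)=0$ for all $k\ge 0$. The key technical step is then to show that $R$ is $\sigma$-skew Armendariz: if $p=\sum a_{i}x^{i}$ and $q=\sum b_{j}x^{j}$ satisfy $pq=0$, then $a_{i}\sigma^{i}(b_{j})=0$ for every pair $i,j$. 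I would argue by induction on $\deg q$. The inductive step first proves $a_{i}\sigma^{i}(b_{0})=0$ for every $i$ by a secondary induction on $i$: right-multiply the $k$th coefficient relation $\sum_{i+j=k}a_{i}\sigma^{i}(b_{j})=0$ by $\sigma^{k-1}(b_{0})$, use semicommutativity together with the iterated $(\mathcal{C}_{\sigma}^{1})$-fact $a_{i'}\sigma^{t}(b_{0})=0$ for $t\ge i'$ to kill every term with $i'<k$, and then apply $(\mathcal{C}_{\sigma}^{2})$ to $a_{k}\sigma^{k}(b_{0})\sigma^{k-1}(b_{0})=0$. Next, note that $p(q-b_{0})=0$; writing $q-b_{0}=q_{1}x$ with $\deg q_{1}<\deg q$ and cancelling the injective right factor $x$, the outer induction applies to $pq_{1}=0$ and yields $a_{i}\sigma^{i}(b_{j})=0$ for all $j\ge 1$. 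With this Armendariz-type identity in hand, for any $r\in R$ and $\ell\ge 0$ each coefficient of $p(rx^{\ell})q$ is a sum of terms $a_{i}\sigma^{i}(r)\sigma^{i+\ell}(b_{j})$, which vanish because iterated $(\mathcal{C}_{\sigma}^{1})$ gives $a_{i}\sigma^{i+\ell}(b_{j})=0$ and semicommutativity absorbs the intervening $\sigma^{i}(r)$; hence $pSq=0$ whenever $pq=0$, so $R[x;\sigma]$ is semicommutative and Proposition~\ref{semicomm/Mccoy} completes (1).

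For (2), reducedness implies semicommutativity and reversibility, so from $ab=0$ one has $ba=0$, and right $\sigma$-reversibility gives $a\sigma(b)=0$, which is $(\mathcal{C}_{\sigma}^{1})$. To upgrade to $(\mathcal{C}_{\sigma}^{2})$, suppose $m\sigma(a)a=0$. Then $(\mathcal{C}_{\sigma}^{1})$ yields $m\sigma(a)^{2}=0$ and reversibility gives $\sigma(a)^{2}m=0$; consequently
\[
\bigl(\sigma(a)m\sigma(a)\bigr)^{2}=\sigma(a)\bigl(m\sigma(a)^{2}\bigr)m\sigma(a)=0,
\]
so $\sigma(a)m\sigma(a)=0$ by reducedness, and then $(m\sigma(a))^{2}=m\bigl(\sigma(a)m\sigma(a)\bigr)=0$ gives $m\sigma(a)=0$. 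Hence $R$ satisfies the hypotheses of (1), and (2) follows.

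The principal obstacle is the $\sigma$-skew Armendariz step of (1): the idea of isolating a single surviving term by right-multiplying the coefficient relation by $\sigma^{k-1}(b_{0})$ and then peeling via $(\mathcal{C}_{\sigma}^{2})$ is conceptually clean, but orchestrating the nested inductions on $\deg q$ and on the inner index $i$ so that every reduction is backed by a previously-proven vanishing is the genuinely delicate part of the argument.
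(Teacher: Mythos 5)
Your proposal is correct, but it takes a much more self-contained route than the paper: the paper disposes of part (1) in one line by citing Proposition~3.4 of the author's earlier paper \cite{louzari/2011}, and of part (2) by the (unstated) reduction that reduced plus right $\sigma$-reversible gives the hypotheses of (1). What you have done, in effect, is reconstruct the cited external result: you first observe that semicommutativity together with $(\mathcal{C}_{\sigma}^{2})$ yields $(\mathcal{C}_{\sigma}^{1})$ and its iterates $a\sigma^{k}(b)=0$, then prove by the double induction (with right multiplication by $\sigma^{k-1}(b_{0})$ and cancellation of the right factor $x$, which is indeed injective since coefficients are written on the left) that $pq=0$ forces $a_{i}\sigma^{i}(b_{j})=0$ for all $i,j$, i.e.\ a $\sigma$-skew Armendariz conclusion; all of these steps check out. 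Two remarks on economy: once $a_{i}\sigma^{i}(b_{j})=0$ is established, $\sigma$-skew McCoyness follows at once by taking $c$ to be any nonzero coefficient $b_{j}$ of $q$, since then $p(x)b_{j}=\sum_{i}a_{i}\sigma^{i}(b_{j})x^{i}=0$; your further detour through the semicommutativity of $R[x;\sigma]$ and Proposition~\ref{semicomm/Mccoy} is valid (and yields the stronger by-product that $R[x;\sigma]$ is semicommutative under these hypotheses), but it is not needed for the stated corollary. For part (2), your explicit verification that reducedness gives semicommutativity and reversibility, that reversibility plus right $\sigma$-reversibility gives $(\mathcal{C}_{\sigma}^{1})$, and that the reduced-ring computation $\bigl(\sigma(a)m\sigma(a)\bigr)^{2}=0$ upgrades this to $(\mathcal{C}_{\sigma}^{2})$, is exactly the content hidden behind the paper's ``clearly from (1)'', and it is carried out correctly.
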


\begin{proof}$(1)$ Immediately from \cite[Proposition 3.4]{louzari/2011}. $(2)$ Clearly from $(1)$.
\end{proof}

There is an example of a ring $R$ and an endomorphism $\sigma$ of $R$, such that $R[x;\sigma]$ is semicommutative and $R$ is $\sigma$-skew McCoy.

\begin{ex}\label{counter/ex1}Consider the ring $R=\Z_2[x]$ where $\Z_2$ is the ring of integers modulo $2$ and $\sigma$ an endomorphism of $R$ defined by $\sigma(f(x))=f(0)$.
\NL$(1)$ By \cite[Example 5]{hong/2003}, $R$ is $\sigma$-skew Armendariz then it's $\sigma$-skew McCoy.
\NL$(2)$ $R$ is a commutative domain so it's Baer then right p.q.-Baer (\cite[Example 8]{hong/2006}).
\NL$(3)$ Since $R$ is semicommutative and right p.q.-Baer, if we show that $R$ satisfies $(\mathcal{C}_{\sigma}^1)$ then by \cite[Corollary 3.5]{louzari/2011}, $R[y,\sigma]$ is semicommutative. Let $f=a_0+a_1x+\cdots+a_nx^n$ and $g=b_0+b_1x+\cdots+b_mx^m\in R$. Suppose that $fg=0$ then we have the system of equations:
$$a_0b_0=0\qquad\qquad\qquad\quad\qquad\eqno(0)$$
$$a_0b_1+a_1b_0=0\qquad\qquad\qquad\eqno(1)$$
$$a_0b_2+a_1b_1+a_2b_0=0\qquad\quad\eqno(2)$$
$$a_0b_3+a_1b_2+a_2b_1+a_3b_0=0\eqno(3)$$
$$\vdots\qquad\qquad\qquad\qquad\;\;$$
$$a_nb_m=0\qquad\qquad\qquad\qquad\eqno(n+m)$$
Eq.$(0)$ implies $a_0=0$ or $b_0=0$, if $b_0=0$ then $f\sigma(g)=fb_0=0$. We can suppose $b_0\neq 0$. Then $a_0=0$. Eq.$(1)$ implies $a_1b_0=0$. Eq.$(2)$ implies $a_1b_1+a_2b_0=0$, multiplying on the right by $b_0$, we find $a_1b_1b_0+a_2b_0^2=0$ then $a_2b_0^2=0$ because $a_1b_1b_0=0$ $($since $\Z_2$ is semicommutative$)$, also $\Z_2$ is reduced then $a_2b_0=0$. We continue with the same manner yields $a_ib_0=0$ for $i=0,1,\cdots,n$. Thus $fb_0=f\sigma(g)=0$. Therefore $R$ satisfies the condition $(\mathcal{C}_{\sigma}^1)$.

\end{ex}

\begin{ex}Consider the ring $R=\Z_2\oplus\Z_2$ with the usual addition and multiplication. Let $\sigma\colon R\rightarrow R$ be defined by $\sigma(a,b)=(b,a)$. Consider $p(x)=(1,0)+(1,0)x$ and $q(x)=(0,1)+(1,0)x\in R[x;\sigma]$, we have $p(x)q(x)=0$. But $p(x)c\neq 0$ for any nonzero $c\in R$. Thus $R$ is not $\sigma$-skew McCoy. Also $p(x)(1,0)q(x)=(1,0)x\neq 0$.Therefore $R[x;\sigma]$ is not semicommutative.
\end{ex}

\begin{ex}Let $\Z_3$ be the ring of integers modulo $3$. Consider the $2\times 2$ matrix ring $R=Mat_2(\Z_3)$ over $\Z_3$ and an endomorphism $\sigma\colon R\rightarrow R$ be defined by
 $$\sigma\parth{\left(
\begin{array}{cc}
a & b \\
c & d \\
\end{array}
\right)
}=\left(
\begin{array}{cc}
a & -b \\
-c & d \\
\end{array}
\right).
$$
\NL$(1)$ By \cite[Example 11]{kwak/2009}, $R$ is not $\sigma$-skew McCoy.
\NL$(2)$ $R[x;\sigma]$ is not semicommutative. For
$$p(x)=\left(
\begin{array}{cc}
1 & 0 \\
0 & 0 \\
\end{array}
\right)+\left(
\begin{array}{cc}
1 & 1 \\
0 & 0 \\
\end{array}
\right)x\;,\quad q(x)=\left(
\begin{array}{cc}
0 & 0 \\
0 & -1 \\
\end{array}
\right)+\left(
\begin{array}{cc}
0 & 1 \\
0 & 1 \\
\end{array}
\right)x\in R[x;\sigma].$$
We have $p(x)q(x)=0$, but $p(x)\left(
\begin{array}{cc}
1 & 1 \\
0 & 0 \\
\end{array}
\right)q(x)\neq 0$.
\end{ex}

\section{Nagata Extension and McCoyness}

The next construction is due to Nagata \cite{nagata}. Let $R$ be a commutative ring, $M_R$ be  an $R$-module and $\sigma$ an endomorphism of $R$. The $R$-module $R\oplus_{\sigma}M_R$ acquires a ring structure (possibly noncommutative), where the product is defined by $(a,m)(b,n)=(ab,n\sigma(a)+mb)$, where $a,b\in R$ and $m,n\in M_R$. We shall call this extension the {\it Nagata extension} of $R$ by $M_R$ and $\sigma$. If $\sigma=id_R$, then $R\oplus_{id_R} M_R$ (denoted by $R\oplus M_R$) is a commutative ring. Anderson and Camillo \cite{anderson/camillo/1998}, have proved that if $R$ is commutative domain then $M_R$ is Armendariz if and only if $R\oplus M_R$ is Armendariz. We'll see that this result still true for $R\oplus_{\sigma} M_R$. Kim et al. \cite{kim/2003}, have proved that, if $R$ is a commutative domain and $\sigma$ is a monomorphism of $R$ then $R\oplus_{\sigma}R$ is reversible, and so it is McCoy. Recall that if $\sigma$ is an endomorphism of a ring $R$, then the map $R[x]\rightarrow R[x]$ defined by $\sum_{i=0}^{n}a_ix^i\mapsto\sum_{i=0}^{n}\sigma(a_i)x^i$ is an endomorphism of the polynomial ring $R[x]$ and clearly this map extends $\sigma$. We shall also denote the extended map $R[x]\rightarrow R[x]$ by $\sigma$ and the image of $f\in R[x]$ by $\sigma(f)$. In this section, we'll discuss when the Nagata extension $R\oplus_{\sigma}M_R$ is McCoy. Let $R$ be a commutative domain. The set $T(M)=\{m\in M|r_R(m)\neq 0\}$ is called the {\it torsion submodule} of $M_R$. If $T(M)=M$ (resp., $T(M)=0$) then $M_R$ is {\it torsion} (resp., {\it torsion-free}).

\begin{prop}\label{prop2}Let $R$ be a commutative domain and $M_R$ an $R$-module. Then $R\oplus_{\sigma} M_R$ is Armendariz if and only if $M_R$ is Armendariz. In particular, if $M_R$ is torsion-free then $R\oplus_{\sigma} M_R$ is Armendariz.
\end{prop}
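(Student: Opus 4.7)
The plan is to work coordinate-by-coordinate, viewing a polynomial $F(x) \in (R \oplus_\sigma M_R)[x]$ as a pair $(f(x), m(x))$ with $f(x) = \sum_i a_i x^i \in R[x]$ and $m(x) = \sum_i m_i x^i \in M[x]$. First I would unwind the Nagata rule $(a,m)(b,n) = (ab,\, n\sigma(a) + mb)$ at the polynomial level to obtain
$$F(x) G(x) = \bigl(f(x)g(x),\ n(x)\sigma(f(x)) + m(x)g(x)\bigr),$$
where $G(x) = (g(x), n(x))$ and $\sigma$ is applied coefficientwise. Hence $F(x)G(x) = 0$ is equivalent to the conjunction of $f(x)g(x) = 0$ in $R[x]$ and $n(x)\sigma(f(x)) + m(x)g(x) = 0$ in $M[x]$.

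For the $(\Leftarrow)$ direction, assuming $M_R$ is Armendariz, I would use that $R$ is a commutative domain, so $R[x]$ is a domain and the equation $f(x)g(x) = 0$ forces $f(x) = 0$ or $g(x) = 0$. If $f(x) = 0$, the second equation collapses to $m(x)g(x) = 0$, and the Armendariz property of $M_R$ yields $m_i b_j = 0$; if $g(x) = 0$, it collapses to $n(x)\sigma(f(x)) = 0$, and the same property applied with the polynomial $\sigma(f)(x) \in R[x]$ yields $n_j \sigma(a_i) = 0$. Either way $(a_i, m_i)(b_j, n_j) = (a_i b_j,\, n_j\sigma(a_i) + m_i b_j) = 0$, establishing the Armendariz property of $R \oplus_\sigma M_R$.

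For the $(\Rightarrow)$ direction, given $m(x)g(x) = 0$ in $M[x]$ with $g(x) \in R[x]$, I would embed into $(R \oplus_\sigma M_R)[x]$ by setting $F(x) := (0, m(x))$ and $G(x) := (g(x), 0)$; the product formula gives $F(x) G(x) = (0, m(x)g(x)) = 0$, so the Armendariz hypothesis on $R \oplus_\sigma M_R$ forces $(0, m_i)(b_j, 0) = (0, m_i b_j) = 0$, i.e.\ $m_i b_j = 0$. Finally, the ``in particular'' clause follows once I observe that a torsion-free module over a commutative domain is itself Armendariz: if $m(x)g(x) = 0$ with $g(x) \neq 0$ of leading coefficient $b_k \neq 0$ and $m(x) \neq 0$ of leading coefficient $m_n$, then $m_n b_k = 0$ with $b_k \neq 0$ contradicts torsion-freeness, so $m(x) = 0$ and the Armendariz condition is trivial.

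The only real obstacle is careful bookkeeping in the product formula, especially tracking that $\sigma$ twists the left factor $f$ in the cross term but not the right factor $g$; once this is correctly recorded, each direction reduces mechanically to an application of the Armendariz property of $M_R$ to one of the two polynomial identities coming from the coordinates.
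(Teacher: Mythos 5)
Your proposal is correct and follows essentially the same route as the paper: identify $(R\oplus_{\sigma}M_R)[x]$ with $R[x]\oplus_{\sigma}M[x]$ via the product formula $(f,m)(g,n)=(fg,\,n\sigma(f)+mg)$, use that $R[x]$ is a domain to split into the cases $f=0$ or $g=0$ and apply the Armendariz property of $M_R$ (to $g$ or to $\sigma(f)$), and embed $m(x)$, $g(x)$ as $(0,m)$, $(g,0)$ for the converse. Your leading-coefficient argument for the ``in particular'' clause (torsion-free $\Rightarrow$ Armendariz) is a small correct addition that the paper states without proof.
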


\begin{proof}Let $R'=R\oplus_{\sigma} M_R$, we have $R'[x]=R[x]\oplus_{\sigma}M[x]$. Suppose that $R'$ is Armendariz. Let $m=\sum_{i=0}^{p}m_ix^i\in M[x]$ and $f=\sum_{j=0}^{q}a_jx^j\in R[x]$ with $mf=0$. We have $(0,m)=\sum_{i=0}^{p}(0,m_i)x^i\in R'[x]$ and $(f,0)=\sum_{j=0}^{q}(a_j,0)x^j\in R'[x]$, since $R'$ is Armendariz then $(0,m_i)(a_j,0)=(0,m_ia_j)=(0,0)$ for all $i,j$. Thus $m_ia_j=0$ for all $i,j$. Conversely, suppose that $M_R$ is Armendariz. Let $f,g\in R[x]$ and $m,n\in M[x]$ such that $(f,m)(g,n)=(0,0)$. Write $(f,m)=\sum(a_i,m_i)x^i\in R'[x]$ and $(g,n)=\sum(b_j,n_j)x^j\in R'[x]$. From $(f,m)(g,n)=(0,0)$, we have $(fg,n\sigma(f)+mg)=(0,0)$. Since $R[x]$ is a commutative domain, then $f=0$ or $g=0$. If $f=0$, we get $mg=0$. Then $m_ib_j=0$ and $a_i=0$ for all $i,j$. Thus $(a_i,m_i)(b_j,n_j)=(a_ib_j,n_j\sigma(a_i)+m_ib_j)=(0,0)$. Otherwise, we get $n\sigma(f)=0$. Then $b_j=0$ and $n_j\sigma(a_i)=0$ for all $i,j$. Thus $(a_i,m_i)(b_j,n_j)=(a_ib_j,n_j\sigma(a_i)+m_ib_j)=(0,0)$. Therefore $R'$ is Armendariz.
\end{proof}

\begin{cor}Let $R$ be a commutative domain and $M_R$ an $R$-module satisfying the condition $(\mathcal{C}_{id_R}^2)$. Then $R\oplus_{\sigma} M_R$ is Armendariz.
\end{cor}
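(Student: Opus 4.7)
The plan is to invoke Proposition \ref{prop2}: since $R$ is a commutative domain, that result reduces Armendariz-ness of $R\oplus_{\sigma}M_R$ to Armendariz-ness of $M_R$, regardless of what $\sigma$ is. Hence it suffices to prove that, whenever $R$ is a commutative domain and $M_R$ satisfies $(\mathcal{C}_{id_R}^2)$, the module $M_R$ itself is Armendariz.

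To this end, I would take $m=\sum_{i=0}^p m_ix^i\in M[x]$ and $f=\sum_{j=0}^q a_jx^j\in R[x]$ with $mf=0$, so that $c_s:=\sum_{i+j=s} m_ia_j=0$ for every $s$, and prove $m_ia_j=0$ by a double induction, outer on $j$ and inner on $i$. A preliminary observation that drives the whole argument is that iterating $(\mathcal{C}_{id_R}^2)$ yields $ma^n=0\Rightarrow ma=0$ for every $n\geq1$; this is what will convert the ``high-power annihilations'' produced by multiplying coefficient equations by $a_0^{\,i}$ or $a_j$ into the annihilations I actually need.

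For the inductive step one assumes $m_{i'}a_{j'}=0$ for all $i'$ whenever $j'<j$, and also $m_{i'}a_j=0$ for $i'<i$; then $c_{i+j}=0$ collapses, after discarding the terms killed by the outer hypothesis, to
\[m_ia_j+\sum_{s=0}^{i-1}m_sa_{i+j-s}=0.\]
Multiplying on the right by $a_j$ and using commutativity of $R$ rewrites each tail term as $m_sa_ja_{i+j-s}$, which vanishes by the inner hypothesis; this leaves $m_ia_j^2=0$, whence $m_ia_j=0$ by $(\mathcal{C}_{id_R}^2)$. The outer base case $j=0$ works the same way with $a_0^{\,i}$ in place of $a_j$: the equation $c_i=0$, multiplied by $a_0^{\,i}$, reduces (by the inner hypothesis $m_{i'}a_0=0$ for $i'<i$ together with commutativity) to $m_ia_0^{\,i+1}=0$, and the iterated form of $(\mathcal{C}_{id_R}^2)$ then delivers $m_ia_0=0$. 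The main obstacle is purely organizational---making sure that at each stage the single term $m_ia_j$ is isolated while every other term carries a factor killed by the current hypothesis---and commutativity of $R$, together with the iterated $(\mathcal{C}_{id_R}^2)$, is precisely what enables this rearrangement. Once $M_R$ is known to be Armendariz, Proposition \ref{prop2} closes the argument.
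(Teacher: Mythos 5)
Your proof is correct, and its outer skeleton coincides with the paper's: reduce the statement to showing that $M_R$ itself is Armendariz, then invoke Proposition \ref{prop2}. The difference lies in how that key fact is obtained. The paper disposes of it in one line, observing that over a commutative ring $M_R$ is semicommutative and citing \cite[Lemma 3.3]{louzari/2011} for the claim that such a module with $(\mathcal{C}_{id_R}^2)$ is Armendariz; you instead reprove it from scratch via the classical reduced-module argument: the iterated form $ma^n=0\Rightarrow ma=0$ of $(\mathcal{C}_{id_R}^2)$, a double induction on the coefficient equations of $mf=0$, and right multiplication by $a_j$ (or $a_0^{\,i}$) together with commutativity of $R$ to isolate $m_ia_j^2=0$. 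Your induction is sound: in the step for $(i,j)$ the outer hypothesis kills every term $m_sa_t$ with $t<j$, multiplication by $a_j$ plus the inner hypothesis kills the tail $\sum_{s<i}m_sa_{i+j-s}a_j$, and $(\mathcal{C}_{id_R}^2)$ finishes; in the base case $j=0$ multiplying by $a_0$ alone would already suffice, so the power $a_0^{\,i}$ and the iterated condition are harmless overkill. What your route buys is self-containedness, and it makes visible that the domain hypothesis is needed only for Proposition \ref{prop2} (the Armendariz property of $M_R$ uses nothing beyond commutativity of $R$ and $(\mathcal{C}_{id_R}^2)$); what the paper's citation buys is brevity, at the cost of resting on an external lemma.
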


\begin{proof}Since $M_R$ is semicommutative then it is Armendariz by \cite[Lemma 3.3]{louzari/2011}.
\end{proof}

\begin{prop}\label{prop1}Let $R$ be a commutative ring and $M_R$ an $R$-module such that $R$ satisfies $(\mathcal{C}_{\sigma}^1)$ and $M_R$ satisfies $(\mathcal{C}_{\sigma}^2)$. Then $R\oplus_{\sigma}M_R$ is a semicommutative ring.
\end{prop}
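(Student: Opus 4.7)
The plan is to fix $(a,m),(b,n)\in R':=R\oplus_\sigma M_R$ with $(a,m)(b,n)=(0,0)$ and an arbitrary $(c,k)\in R'$, and to show that $(a,m)(c,k)(b,n)=(0,0)$. Unpacking $(a,m)(b,n)=(ab,\,n\sigma(a)+mb)$ gives the two scalar identities
\begin{equation*}
 ab=0 \qquad\text{and}\qquad n\sigma(a)+mb=0.
\end{equation*}

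The central step I would perform first is to \emph{decouple} the second identity into the two separate equations $n\sigma(a)=0$ and $mb=0$; this is where the hypothesis on $M_R$ will do its job. Multiplying $n\sigma(a)+mb=0$ on the right by $a$ gives $n\sigma(a)a+mba=0$. Since $R$ is commutative and $ab=0$, we have $mba=mab=0$, leaving $n\sigma(a)a=0$. The hypothesis $(\mathcal{C}_\sigma^2)$ on $M_R$ then forces $n\sigma(a)=0$, and feeding this back into $n\sigma(a)+mb=0$ yields $mb=0$.

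Next I would extract the companion identity $\sigma(a)b=0$, which will be needed for the mixed term in the triple product. From $ab=0$ and commutativity, $ba=0$; applying $(\mathcal{C}_\sigma^1)$ to $R$ as a right $R$-module over itself gives $b\sigma(a)=0$, and commutativity converts this to $\sigma(a)b=0$. With these three facts ($ab=0$, $mb=0$, $n\sigma(a)=0$, $\sigma(a)b=0$) in hand, the final step is a direct computation: expanding via the Nagata rule twice,
\begin{equation*}
 (a,m)(c,k)(b,n)=\bigl(acb,\; n\sigma(a)\sigma(c)+k\sigma(a)b+mcb\bigr).
\end{equation*}
The first coordinate is $acb=abc=0$ by commutativity. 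In the second coordinate, $n\sigma(a)\sigma(c)=(n\sigma(a))\sigma(c)=0$, $k\sigma(a)b=k(\sigma(a)b)=0$, and $mcb=m(bc)=(mb)c=0$.

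The main obstacle is precisely the decoupling step at the start. Without $(\mathcal{C}_\sigma^2)$ on $M_R$ one cannot pass from $n\sigma(a)+mb=0$ to $mb=0$ and $n\sigma(a)=0$ individually, and without this separation the mixed terms $mcb$ and $n\sigma(a)\sigma(c)$ in the triple product cannot be controlled; the rest of the argument is just a bookkeeping exercise using commutativity and $(\mathcal{C}_\sigma^1)$ to shuffle the remaining factors past one another.
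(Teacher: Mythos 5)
Your proof is correct and follows essentially the same route as the paper: multiply $n\sigma(a)+mb=0$ on the right by $a$, invoke $(\mathcal{C}_{\sigma}^2)$ to decouple it into $n\sigma(a)=0$ and $mb=0$, use $(\mathcal{C}_{\sigma}^1)$ on $R$ together with commutativity to get $\sigma(a)b=0$, and then kill each term of the expanded triple product. In fact you spell out the steps (e.g.\ the passage $ab=0\Rightarrow ba=0\Rightarrow b\sigma(a)=0\Rightarrow\sigma(a)b=0$) more explicitly than the paper does.
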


\begin{proof}We'll use freely the conditions $(\mathcal{C}_{\sigma}^1)$ and $(\mathcal{C}_{\sigma}^2)$. Let $(r,m),(s,n)\in R\oplus_{\sigma}M_R$ such that $$(r,m)(s,n)=(rs,n\sigma(r)+ms)=(0,0).\eqno(1)$$ We'll show that for any $(t,u)\in R\oplus_{\sigma}M_R$ $$(r,m)(t,u)(s,n)=(rts,n\sigma(rt)+u\sigma(r)s+mts)=(0,0).\eqno(2)$$ It suffices to show $n\sigma(rt)+u\sigma(r)s+mts=0$. Multiplying $n\sigma(r)+ms=0$ of Eq.$(1)$ on the right hand by $r$, gives $n\sigma(r)r=0$, so we get $n\sigma(r)=0$ and hence $ms=0$. Thus $n\sigma(rt)=mts=0$. Clearly $rs=0$ implies $\sigma(r)s=0$ and so $u\sigma(r)s=0$. Therefore $n\sigma(rt)+u\sigma(r)s+mts=0$.
\end{proof}

\begin{prop}\label{prop3}Let $R$ be commutative domain and $M_R$ an $R$-module. Then $R\oplus_{\sigma}M_R$ is a semicommutative right McCoy ring.
\end{prop}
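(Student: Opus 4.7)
The plan is to handle the two assertions separately. Semicommutativity of $R' := R \oplus_\sigma M_R$ is established by a direct coordinate computation, using only that $R$ is a commutative domain. For right McCoyness, I invoke Proposition \ref{semicomm/Mccoy} with the identity endomorphism of $R'$, which reduces matters to showing that $R'[x]$ is semicommutative; this in turn follows by recognising $R'[x]$ as itself a Nagata extension of the same form and reapplying the semicommutativity argument with $R$ replaced by $R[x]$ and $M$ by $M[x]$.

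For semicommutativity, suppose $(r,m)(s,n) = (rs,\,n\sigma(r)+ms) = (0,0)$. Since $R$ is a domain, $rs=0$ forces $r=0$ or $s=0$, and accordingly $ms=0$ or $n\sigma(r)=0$. For any $(t,u)\in R'$, I expand
\[
(r,m)(t,u)(s,n) = \bigl(rts,\ n\sigma(r)\sigma(t) + u\sigma(r)s + mts\bigr)
\]
and check that every summand in the second coordinate vanishes in each of the two cases. The terms carrying $\sigma(r)$ are killed when $r=0$, those carrying $s$ when $s=0$, and the remaining $mts$ is handled by commutativity of $R$: for instance, $mts = m(st) = (ms)t = 0$ when $r=0$, and $mts = (mt)s = 0$ when $s=0$.

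For right McCoyness, I use the ring isomorphism $R'[x]\cong R[x] \oplus_\sigma M[x]$, obtained by sending $\sum_i(a_i,m_i)x^i$ to $\bigl(\sum_i a_i x^i,\ \sum_i m_i x^i\bigr)$, with $\sigma$ extended to $R[x]$ coefficient-wise as explained at the start of Section 3. Since $R[x]$ is again a commutative domain and $M[x]$ is an $R[x]$-module, the semicommutativity argument of the previous paragraph applies verbatim with $(R[x], M[x])$ in place of $(R, M)$, so $R'[x]$ is semicommutative. Proposition \ref{semicomm/Mccoy}, applied to $R'$ with its identity endomorphism, then delivers that $R'$ is right McCoy. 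The only non-mechanical observation is that the commutative-domain hypothesis transfers from $R$ to $R[x]$, which is precisely what enables the iteration; no substantive obstacle arises.
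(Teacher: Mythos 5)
Your proof is correct, and while the semicommutativity half is essentially the paper's own computation (the case split $r=0$ or $s=0$ applied to the equations of Proposition~\ref{prop1}), the McCoy half takes a genuinely different route. The paper argues that $R'=R\oplus_{\sigma}M_R$ satisfies $(\mathcal{C}_{id_{R'}}^2)$, i.e.\ $AB^2=0$ implies $AB=0$, and then invokes Corollary~\ref{cor2}(1), which rests on the external result \cite[Proposition 3.4]{louzari/2011}; you instead note that $R'[x]\cong R[x]\oplus_{\sigma}M[x]$ is again a Nagata extension over the commutative domain $R[x]$ (an identification the paper itself uses in Proposition~\ref{prop2}), rerun the semicommutativity argument at that level, and then apply Proposition~\ref{semicomm/Mccoy} with the identity endomorphism, so everything reduces to the paper's own Theorem~\ref{gen/mccoy}. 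Your route buys two things: it stays entirely inside the paper's machinery and produces the stronger intermediate fact that $R'[x]$ itself is semicommutative (precisely the property whose failure for general semicommutative rings is emphasized in the introduction), and it bypasses the $(\mathcal{C}_{id}^2)$ verification, which as written in the paper is questionable: from $(r,m)(s,n)^2=0$ one only gets $rs^2=0$, hence $r=0$ or $s=0$, and neither case forces $(r,m)(s,n)=(rs,\,n\sigma(r)+ms)=0$ --- for instance with $R=M=\Z$, $\sigma=id$, $(r,m)=(1,0)$ and $(s,n)=(0,1)$ one has $(s,n)^2=(0,0)$ yet $(r,m)(s,n)=(0,1)\neq 0$. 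So your argument is not merely a valid alternative; it is arguably a repair of the published proof, at the modest cost of being a bit longer than the paper's appeal to the cited corollary.
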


\begin{proof}Consider equations $(1)$ and $(2)$ of Proposition \ref{prop1}. From Eq.$(1)$, we get $r=0$ or $s=0$ since $R$ is a domain. Say $r=0$, then $rts=n\sigma(rt)=u\sigma(r)s=0$, and $mts=0$ from $(1)$, hence we have $(2)$. Next say $s=0$, it follows $rts=u\sigma(r)s=mts=0$ and $n\sigma(rt)=0$ from $(1)$, and so we have $(2)$. Therefore $(r,m)(R\oplus_{\sigma}M)(s,n)=0$. For McCoyness, let $(r,m),(s,n)\in R'=R\oplus_{\sigma}M_R$. Suppose that $(r,m)(s,n)^2=(rs^2,n\sigma(r^2)+ns\sigma(r)+ms^2)=0$, then $r=0$ or $s=0$ which implies $(r,m)(s,n)=(rs,n\sigma(r)+ms)=0$. Thus by Corollary \ref{cor2}(1), $R\oplus_{\sigma}M_R$ is right McCoy.
\end{proof}

The next example shows that under the conditions of Proposition \ref{prop3}, $R\oplus_{\sigma}M_R$ can't be reversible.

\begin{ex}Let $D$ be a commutative domain and $R=D[x]$ be the polynomial ring over $D$ with an indeterminate $x$. Consider the endomorphism $\sigma\colon R\rightarrow R$ defined by $\sigma(f(x))=f(0)$. Since $(x,1)(0,1)=(0,0)$ and $(0,1)(x,1)=(0,x)\neq (0,0)$, then $R\oplus_{\sigma}R$ is not reversible. Thus $R\oplus_{\sigma}M_R$ can't be reversible under the conditions of Proposition \ref{prop3}.
\end{ex}

\begin{lem}\label{lem2}If $M_R$ is an Armendariz module. Let $m(x)\in M[x]$ and $f(x), g(x)\in R[x]$ such that $m(x)=\sum_{i=0}^nm_ix^i$, $f(x)=\sum_{j=0}^pa_jx^j$ and $g(x)=\sum_{k=0}^qb_kx^k$. Then $$m(x)f(x)g(x)=0\Leftrightarrow m_ia_jb_k=0\;\mathrm{for\;all}\;i,j,k.$$
\end{lem}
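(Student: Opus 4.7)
The plan is to establish both directions separately, with the easy direction being immediate and the nontrivial direction following from two successive applications of the Armendariz property of $M_R$.

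For the direction $(\Leftarrow)$, I would simply expand
\[
m(x)f(x)g(x)=\sum_{t}\Bigl(\sum_{i+j+k=t}m_ia_jb_k\Bigr)x^t,
\]
and observe that if every $m_ia_jb_k=0$, then each coefficient of this polynomial vanishes, so the product is zero. This is a one-line computation with no obstacle.

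For the direction $(\Rightarrow)$, the strategy is to peel off the factors one at a time using the module-Armendariz condition. First, set $h(x)=f(x)g(x)=\sum_{\ell=0}^{p+q}c_\ell x^\ell \in R[x]$, where $c_\ell=\sum_{j+k=\ell}a_jb_k$. From $m(x)h(x)=0$ and the hypothesis that $M_R$ is Armendariz, we conclude $m_ic_\ell=0$ for all $i,\ell$. Equivalently, for each fixed $i$, the polynomial $m_ih(x)=m_if(x)g(x)$ is zero in $M[x]$.

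Now I would apply the Armendariz property a second time. For each fixed $i$, consider the element $m_if(x)=\sum_{j=0}^p(m_ia_j)x^j\in M[x]$; its product with $g(x)\in R[x]$ vanishes by the previous step. Applying the Armendariz condition to the pair $(m_if(x),\,g(x))$ yields $(m_ia_j)b_k=0$ for all $j,k$. Letting $i$ range over $\{0,1,\dots,n\}$ gives $m_ia_jb_k=0$ for all $i,j,k$, as required. No obstacle beyond recognizing that iterated use of the Armendariz property is legitimate because $m_if(x)$ is again a polynomial with module coefficients; the argument is otherwise routine bookkeeping.
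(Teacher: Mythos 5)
Your proof is correct and is essentially the paper's argument: both directions are handled the same way, with the nontrivial implication obtained by two successive applications of the Armendariz property of $M_R$. The only cosmetic difference is the order of peeling — the paper first applies Armendariz to the pair $\bigl(m(x)f(x),\,g(x)\bigr)$ to get $m(x)[f(x)b_k]=0$ and then to $\bigl(m(x),\,f(x)b_k\bigr)$, whereas you first apply it to $\bigl(m(x),\,f(x)g(x)\bigr)$ to get $m_if(x)g(x)=0$ and then to $\bigl(m_if(x),\,g(x)\bigr)$; both groupings are legitimate and yield $m_ia_jb_k=0$ for all $i,j,k$.
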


\begin{proof}($\Leftarrow$) Clear. ($\Rightarrow$) If $m(x)f(x)=0$ then $m(x)a_j=0$ for all $j$. Now, if $m(x)f(x)g(x)=0$ then $m(x)[f(x)b_k]=0$ for all $k$. Since $M_R$ is Armendariz we have $m_i(a_jb_k)=0$ for all $i,j$. Thus $m_ia_jb_k=0$ for all $i,j,k$.
\end{proof}

\begin{lem}\label{lem3}If $M_R$ is an Armendariz module satisfying the condition $(\mathcal{C}_{\sigma}^2)$. Then $M[x]_{R[x]}$ satisfies the condition $(\mathcal{C}_{\sigma}^2)$.
\end{lem}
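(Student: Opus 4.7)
The plan is to reduce the polynomial statement to the coefficient level via Lemma~\ref{lem2}, and then apply the hypothesis $(\mathcal{C}_\sigma^2)$ on $M_R$ component by component.

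First, I would write $m(x)=\sum_{i=0}^n m_ix^i\in M[x]$ and $f(x)=\sum_{j=0}^p a_jx^j\in R[x]$, and note that, because $\sigma$ extends to $R[x]$ coefficient-wise, $\sigma(f(x))=\sum_{j=0}^p\sigma(a_j)x^j$ is genuinely a polynomial in $R[x]$. To verify the condition $(\mathcal{C}_\sigma^2)$ for $M[x]_{R[x]}$, I assume $m(x)\,\sigma(f(x))\,f(x)=0$ and must deduce $m(x)\,\sigma(f(x))=0$.

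Since $M_R$ is Armendariz, Lemma~\ref{lem2} applies to the triple product $m(x)\cdot\sigma(f(x))\cdot f(x)$, yielding
$$m_i\,\sigma(a_j)\,a_k=0\qquad\text{for all }i,j,k.$$
Specialising to $j=k$ gives $m_i\,\sigma(a_j)\,a_j=0$ for all $i,j$. The hypothesis $(\mathcal{C}_\sigma^2)$ on $M_R$ then forces $m_i\,\sigma(a_j)=0$ for every pair $(i,j)$. Consequently each coefficient $\sum_{i+j=\ell}m_i\sigma(a_j)$ of $m(x)\sigma(f(x))$ vanishes, so $m(x)\,\sigma(f(x))=0$, which is exactly what $(\mathcal{C}_\sigma^2)$ requires for $M[x]_{R[x]}$.

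No serious obstacle appears. The one point that needs mentioning is the legitimacy of applying Lemma~\ref{lem2}: it asks the leftmost factor to be in $M[x]$ and the other two in $R[x]$, which is the case here precisely because the extended $\sigma$ sends $R[x]$ into $R[x]$. Everything else is a direct coefficient comparison plus a single invocation of the module hypothesis.
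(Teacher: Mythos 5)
Your proof is correct and follows essentially the same route as the paper: apply Lemma~\ref{lem2} to the product $m(x)\,\sigma(f(x))\,f(x)$ (with $\sigma(f(x))$ and $f(x)$ as the two $R[x]$-factors), specialise to $j=k$, invoke $(\mathcal{C}_{\sigma}^{2})$ on $M_R$ coefficientwise, and conclude $m(x)\sigma(f(x))=0$. Nothing further is needed.
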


\begin{proof}Let $m(x)=\sum_{i=0}^nm_ix^i\in M[x]$ and $f(x)=\sum_{j=0}^pa_jx^j\in R[x]$. Suppose that $m(x)\sigma(f(x))f(x)=0$. By Lemma \ref{lem2}, $m_i\sigma(a_j)a_k=0$ for all $i,j,k$. In particular $m_i\sigma(a_j)a_j=0$ for all $i,j$. Then $m_i\sigma(a_j)=0$ for all $i,j$. Therefore $m(x)\sigma(f(x))=0$.
\end{proof}

\begin{theo}\label{theo2}Let $R$ be a commutative Armendariz ring, $\sigma$ an endomorphism of $R$ and $M_R$ a module satisfying the condition $(\mathcal{C}_{\sigma}^2)$. Then $M_R$ is Armendariz if and only if $R\oplus_{\sigma}M_R$ is Armendariz.
\end{theo}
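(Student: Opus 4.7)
The plan is to identify $R'[x]$, where $R'=R\oplus_{\sigma}M_R$, with the Nagata-type ring $R[x]\oplus_{\sigma}M[x]_{R[x]}$ formed using the coefficientwise extension of $\sigma$ to $R[x]$ (the extension already introduced at the start of this section). Under this identification a polynomial $F(x)=\sum(a_i,m_i)x^i\in R'[x]$ corresponds to a pair $(f,m)$ with $f=\sum a_ix^i\in R[x]$ and $m=\sum m_ix^i\in M[x]$, and multiplication takes the familiar form $(f,m)(g,n)=(fg,\,n\sigma(f)+mg)$.

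The easy direction, ``$R'$ Armendariz implies $M_R$ Armendariz'', I would handle first. Given $mf=0$ in $M[x]$ with $m=\sum m_ix^i$ and $f=\sum a_jx^j$, I would embed the data as $(0,m),(f,0)\in R'[x]$ and observe $(0,m)(f,0)=(0,mf)=(0,0)$. The Armendariz property of $R'$ then yields $(0,m_i)(a_j,0)=(0,m_ia_j)=(0,0)$ for all $i,j$, i.e.\ $m_ia_j=0$, which is exactly what is needed.

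For the forward direction, I would take $(f,m),(g,n)\in R'[x]$ with $(f,m)(g,n)=(0,0)$, which splits into the two equations $fg=0$ and $n\sigma(f)+mg=0$. Commutative Armendariz of $R$ applied to the first equation yields $a_ib_j=0$ for all $i,j$. The critical step is to separate the second equation into the two individual vanishings $n\sigma(f)=0$ and $mg=0$. My plan is to right-multiply $n\sigma(f)+mg=0$ by $f$ and invoke commutativity of $R$: since $(mg)f=m(gf)=m(fg)=0$, this forces $n\sigma(f)f=0$. Now Lemma~\ref{lem3} transports the condition $(\mathcal{C}_{\sigma}^2)$ from $M_R$ to $M[x]_{R[x]}$, and applying it to $n\sigma(f)f=0$ gives $n\sigma(f)=0$, whence also $mg=0$. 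Armendariz of $M_R$ finally delivers $m_ib_j=0$ and $n_j\sigma(a_i)=0$ for all $i,j$, and combining with $a_ib_j=0$ yields $(a_i,m_i)(b_j,n_j)=(a_ib_j,\,n_j\sigma(a_i)+m_ib_j)=(0,0)$, completing the Armendariz check for $R'$.

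The main obstacle I anticipate is exactly this splitting of $n\sigma(f)+mg=0$; without the commutativity of $R$ the cancellation $m(gf)=m(fg)=0$ is unavailable, and without $(\mathcal{C}_{\sigma}^2)$ the passage from $n\sigma(f)f=0$ to $n\sigma(f)=0$ fails. These are precisely the hypotheses that replace the domain assumption of Proposition~\ref{prop2}, whose proof could short-circuit the difficulty by factoring $fg=0$ in a domain $R[x]$; in the more general Armendariz setting that shortcut is gone and the two module-theoretic inputs have to do the work.
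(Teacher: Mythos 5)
Your proposal is correct and follows essentially the same route as the paper: identify $R'[x]$ with $R[x]\oplus_{\sigma}M[x]$, use Armendariz of $R$ on $fg=0$, right-multiply $n\sigma(f)+mg=0$ by $f$ and invoke Lemma~\ref{lem3} to split it into $n\sigma(f)=0$ and $mg=0$, then finish with Armendariz of $M_R$. Your write-up is in fact slightly more explicit than the paper's, since you spell out the commutativity step $(mg)f=m(gf)=0$ and the embedding argument for the converse, which the paper dismisses as clear.
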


\begin{proof}Let $f,g\in R[x]$ and $m,n\in M[x]$ such that $(f,m)(g,n)=(0,0)$. Write $(f,m)=\sum(a_i,m_i)x^i\in R'[x]$ and $(g,n)=\sum(b_j,n_j)x^j\in R'[x]$. From $(f,m)(g,n)=(0,0)$, we have $(fg,n\sigma(f)+mg)=(0,0)$. Since $R$ is Armendariz, then $a_ib_j=0$ for all $i,j$. Multiplying $n\sigma(f)+mg=0$ on the right by $f$, we have $n\sigma(f)f=0$ by Lemma \ref{lem3}, we get $n\sigma(f)=0$ and so $mg=0$. Since $M_R$ is Armendariz we have $m_ib_j=0$ and $n_i\sigma(a_j)=0$ for all $i,j$. Thus $(a_i,m_i)(b_j,n_j)=(a_ib_j,n_j\sigma(a_i)+m_ib_j)=(0,0)$. Therefore $R'$ is Armendariz. The converse is clear.
\end{proof}

\begin{cor}\label{cor1}If $R$ is a commutative reduced ring which satisfies the condition $(\mathcal{C}_{\sigma}^1)$ then $R\oplus_{\sigma}R$ is semicommutative and Armendariz.
\end{cor}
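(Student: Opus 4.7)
The plan is to reduce the corollary directly to two results already established in the paper, namely Proposition \ref{prop1} (for semicommutativity) and Theorem \ref{theo2} (for the Armendariz property), both applied with the module $M_R$ taken to be $R_R$. Both of these require the module to satisfy $(\mathcal{C}_{\sigma}^2)$, whereas our hypothesis only gives $(\mathcal{C}_{\sigma}^1)$ on $R$. So the first and essentially only real step is a bootstrap: use commutativity plus reducedness to promote $(\mathcal{C}_{\sigma}^1)$ into $(\mathcal{C}_{\sigma}^2)$ for $R_R$.

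For that bootstrap, suppose $m\sigma(a)a = 0$ with $m,a\in R$. Setting $n = m\sigma(a)$, we have $na = 0$, so by $(\mathcal{C}_{\sigma}^1)$ we obtain $n\sigma(a)=0$, that is,
\[
m\sigma(a)^2 = 0.
\]
Using commutativity of $R$, this yields
\[
(m\sigma(a))^2 = m^2\sigma(a)^2 = m\cdot(m\sigma(a)^2) = 0,
\]
and since $R$ is reduced we conclude $m\sigma(a)=0$. Hence $R_R$ satisfies $(\mathcal{C}_{\sigma}^2)$.

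Now the two conclusions follow painlessly. For semicommutativity, since $R$ is commutative and satisfies $(\mathcal{C}_{\sigma}^1)$ by hypothesis while $R_R$ satisfies $(\mathcal{C}_{\sigma}^2)$ by the previous paragraph, Proposition \ref{prop1} applied with $M_R = R_R$ shows that $R\oplus_{\sigma}R$ is semicommutative. For the Armendariz property, recall that every reduced ring is Armendariz, so $R$ is commutative Armendariz and $R_R$ is an Armendariz module; combined with the condition $(\mathcal{C}_{\sigma}^2)$ just verified, Theorem \ref{theo2} (with $M_R=R_R$) gives that $R\oplus_{\sigma}R$ is Armendariz.

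The only place where any nontrivial work happens is the passage from $(\mathcal{C}_{\sigma}^1)$ to $(\mathcal{C}_{\sigma}^2)$; once that is in hand the rest is just invoking the two prior results. I do not expect any obstacle beyond remembering to exploit reducedness to pass from $(m\sigma(a))^2=0$ back to $m\sigma(a)=0$.
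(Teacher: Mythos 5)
Your proof is correct and follows essentially the same route as the paper, which simply cites Proposition \ref{prop1} and Theorem \ref{theo2}; your explicit bootstrap showing that commutativity, reducedness and $(\mathcal{C}_{\sigma}^1)$ force $R_R$ to satisfy $(\mathcal{C}_{\sigma}^2)$ is exactly the verification the paper leaves implicit in the word ``Immediately.''
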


\begin{proof}Immediately by Proposition \ref{prop1} and Theorem \ref{theo2}.
\end{proof}

\begin{ex}Consider the ring $R=\Z_2\oplus\Z_2$ with the usual addition and multiplication. Let $\sigma\colon R\rightarrow R$ be defined by $\sigma(a,b)=(b,a)$. Clearly $R$ is a commutative reduced ring but not a domain. Let $A=((0,1),(0,1))$, $B=((1,0),(0,1))$ and $C=((1,0),(1,0))$. We have $$AB=((0,1),(0,1))((1,0),(0,1))=((0,0),((0,1)\sigma(0,1)+(0,1)(1,0)))=0.$$ But
$$ACB=((0,1),(0,1))((1,0),(1,0))((1,0),(0,1))=((0,0),(1,0))((1,0),(0,1))$$
$$\qquad\qquad\qquad\qquad\qquad\qquad\qquad\qquad\quad=((0,0),(1,0))\neq 0.$$ Hence $R\oplus_{\sigma}R$ is not semicommutative. On other hand, we have $(1,0)(0,1)=0$, but $(1,0)\sigma((0,1))=(1,0)(1,0)=(1,0)\neq 0$, so $R$ does not satisfying the condition $(\mathcal{C}_{\sigma}^1)$. Thus the condition $(\mathcal{C}_{\sigma}^1)$ in Corollary \ref{cor1} is not superfluous.
\end{ex}

\section*{acknowledgments}The author wishes to thank Professor Chan Yong Hong from Kyung Hee University, Korea for valuable remarks and helpful comments.

\end{document}